\newtheorem{lem}{Lemma}
\newtheorem{lemma}[lem]{Lemma}
\newtheorem{thm}{Theorem}
\newtheorem{theorem}[thm]{Theorem}
\newtheorem{cor}{Corollary}
\newtheorem{corollary}[cor]{Corollary}
\begin{document}

\def\F{{\mathbb F}}
\def\Z{{\mathbb Z}}
\def\N{{\mathbb N}}

\def\cA{{\mathcal A}}
\def\cB{{\mathcal B}}
\def\cC{{\mathcal C}}
\def\cD{{\mathcal D}}
\def\cE{{\mathcal E}}
\def\cG{{\mathcal G}}
\def\cI{{\mathcal I}}
\def\cJ{{\mathcal J}}
\def\cQ{{\mathcal Q}}
\def\cR{{\mathcal R}}
\def\cT{{\mathcal T}}
\def\cX{{\mathcal X}}
\def\cY{{\mathcal Y}}
\def\cP{{\mathcal P}}
\def\cH{{\mathcal H}}
\def\cU{{\mathcal U}}
\def\cV{{\mathcal V}}
\def\cS{{\mathcal S}}

\def\ep{\mathbf{e}_p}

\def\ek{\mathbf{e}_k}

\def\em{\mathbf{e}_m}

\def\ems{\mathbf{e}_{m/s}}

\baselineskip 15pt

\title{On congruences involving product of variables from short intervals}

\author{
{\sc M. Z. Garaev} }

\date{\empty}

\pagenumbering{arabic}

\maketitle


\begin{abstract}
We prove several results which imply the following consequences.

For any $\varepsilon>0$ and any sufficiently large prime $p$,  if $\cI_1,\ldots, \cI_{13}$ are intervals of cardinalities $|\cI_j|>p^{1/4+\varepsilon}$ and $abc\not\equiv 0\pmod p$, then
the congruence
$$
ax_1\cdots x_6+bx_7\cdots x_{13}\equiv c\pmod p
$$
has a solution with $x_j\in\cI_j$.

There exists an absolute constant $n_0\in\N$ such that for any $0<\varepsilon<1$ and any sufficiently large prime $p$, any quadratic residue $\lambda$ modulo $p$ can be represented in the form
$$
x_1\cdots x_{n_0}\equiv \lambda\pmod p,\quad x_i\in\N,\quad x_i\le p^{1/(4e^{2/3})+\varepsilon}.
$$

For any $\varepsilon>0$ there exists $n=n(\varepsilon)\in \N$ such that for any sufficiently large $m\in\N$ the congruence
$$
x_1\cdots x_{n}\equiv 1\pmod m,\quad x_i\in\N,\quad x_i\le m^{\varepsilon}
$$
has a solution with $x_1\not=1$.

\end{abstract}

\paragraph*{2000 Mathematics Subject Classification:} 11A07, 11B50

\paragraph*{Key words:} congruences, small intervals, product of integers.

\section{Introduction}

For a prime $p$, let  $\F_p$ denote the field of residue classes modulo $p$ and $\F_p^*$ be the set of nonzero elements of $\F_p$.

Let $\cI_1,\ldots,\cI_{2k}$ be nonzero intervals in $\F_p$ and  let $\cB$ be the box
$$
\cB=\cI_1\times \cI_2\times\ldots \times \cI_{2k}.
$$
We recall that  a set $\cI\subset\F_p$ is called an interval if
$$
\cI=\{L+1,\ldots, L+N\}\!\!\!\pmod p
$$
for some integers $L$ and $N\ge 1$.

Given elements $a,b\in\F_p^*$ and $c\in\F_p$, we consider the equation
\begin{equation}
\label{eqn:main1}
ax_1\cdots x_k+bx_{k+1}\cdots x_{2k} =c;\quad  (x_1,\ldots,x_{2k})\in\cB.
\end{equation}
The problem is to determine how large the size of the  box $\cB$ should be in order to guarantee the solvability of~\eqref{eqn:main1}.

The case $k=2$ was initiated in the work of Ayyad, Cochrane and Zhang~\cite{ACZh}, and then continued in~\cite{GGar} and~\cite{AC2}.
It was proved in~\cite{ACZh} that there is a constant $C$ such that if $|\cB|>Cp^2\log^4p$, then the equation
\begin{equation}
\label{eqn:k=2}
ax_1 x_2+bx_3x_4 =c;\quad  (x_1,x_2,x_3,x_4)\in\cB.
\end{equation}
has a solution, and they asked whether the factor $\log^4p$ can be removed. The authors of~\cite{GGar}
relaxed the condition to $|\cB|>Cp^2\log p$ and also proved that~\eqref{eqn:k=2} has a solution in any box $\cB$ with $|\cI_1||\cI_3|>15p$ and $|\cI_2||\cI_4|>15p$.
The main question for $k=2$ was solved by Bourgain (unpublished);
he proved that~\eqref{eqn:k=2} has a solution in any box $\cB$ with $|\cB|\ge Cp^{2},$ for some constant $C$.

The case $k\ge 3$ was a subject of investigation of a recent work of Ayyad and Cochrane~\cite{AC}. They proved a number of results and conjectured
that for fixed $k\ge 3$ and $\varepsilon>0$, if $a,b,c\in \F_p^*$, then
there exists a solution of~\eqref{eqn:main1} in any box $\cB$ with $|\cB|>Cp^{2+\varepsilon}$, for some $C=C(\varepsilon,k)$.

Given two sets $\cA,\cB\subset \F_p$, the sum set $\cA+\cB$ and the product set $\cA\cB$ are defined as
$$
\cA+\cB=\{a+b;\, a\in \cA, \, b\in\cB\},\quad \cA\cB=\{ab;\, a\in \cA, \, b\in\cB\}.
$$
For a given $\xi\in \F_p$ we also use the notation
$$
\xi \cA= \{\xi a;\, a\in \cA\},
$$
so that the solvability of~\eqref{eqn:main1} can be restated in the form
$$
c\in a\prod_{i=1}^k\cI_i + b\prod_{i=k+1}^{2k} \cI_i.
$$

In the present paper we prove the following theorems which improve some results of Ayyad and Cochrane for $k\ge 7$ (see, Table 1 of~\cite{AC}).

\begin{theorem}
\label{thm:13 int}
For any $\varepsilon>0$ there exists $\delta=\delta(\varepsilon)>0$ such that the following holds for any sufficiently large prime $p$: let $\cI_1,\cI_2,\ldots,\cI_{13}\subset \F_p^*$ be intervals with
$$
|\cI_i|>p^{1/4-\delta},\quad i=1,2,\ldots, 12;\quad |\cI_{13}|>p^{1/4+\varepsilon}.
$$
Then for any $a,b,c\in \F_p^*$ we have
$$
c \in a\prod_{i=1}^6\cI_i + b\prod_{i=7}^{13} \cI_i.
$$
\end{theorem}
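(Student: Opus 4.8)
The plan is to show that the number $N(c)$ of solutions of $ax_1\cdots x_6+by_7\cdots y_{13}\equiv c$ with $x_i,y_j$ in the prescribed intervals is positive, by comparing it to its expected value through additive characters. Writing $\ep$ for the additive character $z\mapsto e^{2\pi i z/p}$, $M_1=\prod_{i=1}^{6}|\cI_i|$ and $M_2=\prod_{i=7}^{13}|\cI_i|$, orthogonality gives
$$
N(c)=\frac{M_1M_2}{p}+\frac1p\sum_{t=1}^{p-1}\ep(-tc)\,F(ta)\,G(tb),\qquad F(s)=\sum_{x_i\in\cI_i}\ep(s\,x_1\cdots x_6),\quad G(s)=\sum_{y_j\in\cI_j}\ep(s\,y_7\cdots y_{13}).
$$
Under the hypotheses the main term has order at least $p^{9/4+\varepsilon-12\delta}$, so it suffices to bound the error term $E$ below that order.

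First I would apply Cauchy--Schwarz to $E$,
$$
|E|\le\frac1p\Big(\sum_{t\ne0}|F(ta)|^2\Big)^{1/2}\Big(\sum_{t\ne0}|G(tb)|^2\Big)^{1/2}=\frac1p\,W_6^{1/2}W_7^{1/2},
$$
and observe (using that $t\mapsto t/a$ permutes $\F_p^*$, and orthogonality) that $W_6:=\sum_{t\ne0}|F(t)|^2=pJ_6-M_1^2$ and $W_7=pJ_7-M_2^2$, where $J_6,J_7$ are the multiplicative energies of $\cI_1,\dots,\cI_6$ and of $\cI_7,\dots,\cI_{13}$, i.e.\ the numbers of solutions of $x_1\cdots x_6\equiv x_1'\cdots x_6'$ and of $y_7\cdots y_{13}\equiv y_7'\cdots y_{13}'$ with all variables in the corresponding intervals. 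Everything reduces to bounding these energies. Passing to multiplicative characters, $J_k=\frac1{p-1}\sum_\chi\prod_i|S_\chi(\cI_i)|^2$ with $S_\chi(\cI)=\sum_{x\in\cI}\chi(x)$, I would extract one factor trivially via $\max_{\chi\ne\chi_0}|S_\chi(\cI_k)|\le|\cI_k|$ and use orthogonality on the rest to get the recursion $J_k\le M_k^2/p+|\cI_k|^2J_{k-1}$, $M_k=\prod_{i\le k}|\cI_i|$. Seeding it at $J_4$, which lies in the divisor regime since four intervals of length below $p^{1/4}$ have product-volume below $p$ and hence essentially diagonal energy $J_4\ll p^{o(1)}\prod_{i=1}^4|\cI_i|$, yields $J_6\ll p^{2-8\delta+o(1)}$ (the extremal case being all lengths $\approx p^{1/4-\delta}$), and so $W_6\ll p^{3-8\delta+o(1)}$.

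The long interval enters through $J_7$. Writing $\Sigma_6'=\sum_{\chi\ne\chi_0}\prod_{i=7}^{12}|S_\chi(\cI_i)|^2\le(p-1)J_6'$, with $J_6'\ll p^{2-8\delta+o(1)}$ the energy of $\cI_7,\dots,\cI_{12}$, and invoking the Burgess bound for the single interval of length $>p^{1/4+\varepsilon}$, namely $\max_{\chi\ne\chi_0}|S_\chi(\cI_{13})|\le|\cI_{13}|\,p^{-\eta}$ for some $\eta=\eta(\varepsilon)>0$, I factor $\cI_{13}$ out of the non-principal part of $J_7$:
$$
\Sigma_7:=\sum_{\chi\ne\chi_0}|S_\chi(\cI_{13})|^2\prod_{i=7}^{12}|S_\chi(\cI_i)|^2\le|\cI_{13}|^2p^{-2\eta}\,\Sigma_6'\ll p^{7/2+2\varepsilon-2\eta-8\delta+o(1)}.
$$
Hence $W_7=pJ_7-M_2^2\ll M_2^2/p+\Sigma_7\ll p^{7/2+2\varepsilon-2\eta-8\delta+o(1)}$, and combining,
$$
|E|\ll\frac1p\,(W_6W_7)^{1/2}\ll p^{9/4+\varepsilon-\eta-8\delta+o(1)}.
$$
This beats the main term $p^{9/4+\varepsilon-12\delta}$ precisely when $\eta(\varepsilon)>4\delta$, so choosing $\delta=\delta(\varepsilon)<\eta(\varepsilon)/4$ forces $N(c)>0$.

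The main obstacle is the energy bound for products of short intervals: one must show that the six-fold products are almost as evenly spread as random residues, the decisive input being the seed estimate $J_4\ll p^{o(1)}\prod_{i=1}^4|\cI_i|$ for arbitrarily placed intervals of volume below $p$ --- a concentration (divisor) estimate for the modular hypersurface $x_1x_2x_3x_4\equiv y_1y_2y_3y_4$. It is exactly this that keeps the short energies near their equidistributed value $M^2/p$ rather than the $p^{5/2}$ a purely recursive bound seeded only at two intervals would give, which would be fatal. Everything else is a balancing act: all cancellation is supplied by the one interval above the Burgess threshold $p^{1/4}$, and the amount $\delta$ by which the other twelve may fall below $p^{1/4}$ is governed by the Burgess saving $\eta(\varepsilon)$. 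The $6$--$7$ split plays no essential role beyond ensuring that each side has enough factors for the recursion to descend into the divisor regime; longer intervals would only push the energies closer to equidistribution and make the estimate easier.
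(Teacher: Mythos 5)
Your strategy (completing with additive characters, Cauchy--Schwarz in $t$, reduction to the multiplicative energies $J_6,J_7$, Burgess for the one long interval, and the final balancing $\delta<\eta/4$) is internally consistent and genuinely different from the paper's, but it stands or falls with the seed estimate $J_4\ll p^{o(1)}\prod_{i=1}^{4}|\cI_i|$, and that seed is a real gap. Your justification --- ``four intervals of length below $p^{1/4}$ have product-volume below $p$ and hence essentially diagonal energy'' --- is valid only for intervals anchored at the origin, i.e.\ $\cI_i\subset\{1,\ldots,N_i\}$ with $N_1N_2N_3N_4<p$: there the products are integers smaller than $p$, the congruence $x_1x_2x_3x_4\equiv y_1y_2y_3y_4\pmod p$ lifts to the integer equation $x_1x_2x_3x_4=y_1y_2y_3y_4$, and the divisor bound applies. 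The theorem, however, concerns arbitrarily placed intervals $\{L+1,\ldots,L+N\}\pmod p$, whose elements are residues of size up to $p$; their products wrap around modulo $p$, there is no lift to an integer equation, and no divisor-function argument is available. Note that the paper's own Lemma~\ref{lem: card XN} uses exactly this divisor trick, but only because there the variables to which it is applied run over the origin-anchored range $1\le y_i\le N$, while the arbitrary set $\cX$ never enters the divisor step. For arbitrarily placed intervals of length near $p^{1/4}$, near-diagonal behaviour of the four-fold energy is not a known result: the strongest input of this type in the literature is~\cite[Corollary 18]{BGKSh} (Lemma~\ref{lem: BGKSh} of the paper), which is a cardinality lower bound $|\cI_1\cI_2\cI_3|\ge h^{3-o(1)}$ for \emph{triple product sets} --- strictly weaker than an energy bound, and involving only three intervals.

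The gap also cannot be closed by seeding your recursion one step earlier. Even granting the three-fold energy bound $J_3\ll h^{3+o(1)}$ (the natural energy analogue of Lemma~\ref{lem: BGKSh}), the recursion gives $J_4\ll h^2J_3\approx p^{5/4-5\delta}$, then $J_6\ll p^{9/4-9\delta}$, hence $W_6\ll p^{13/4-9\delta}$ and $W_7\ll p^{15/4+2\varepsilon-2\eta-9\delta}$, so the error term is $\ll p^{5/2+\varepsilon-\eta-9\delta}$; this beats the main term $p^{9/4+\varepsilon-12\delta}$ only if $\eta>1/4+3\delta$, which no Burgess saving can supply. The loss is a full factor $p^{1/4}$, not something a smaller choice of $\delta$ can repair. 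This quantitative cliff is precisely why the paper never counts solutions in the original box: it forms four triple products of $\cI_1,\ldots,\cI_{12}$, attaches $\cI_{13}$ to one of them, uses Lemma~\ref{lem: BGKSh} for the cardinality of each triple product, Lemma~\ref{lem: card XI} (Burgess) to show that multiplication by the long interval expands that cardinality by a power of $p$, and then invokes the Hart--Iosevich covering theorem (Lemma~\ref{lem: HI}), which needs only the inequality $|\cA||\cB||\cC||\cD|>p^3$ between cardinalities and no control whatsoever on multiplicities of representation. Passing from solution counts to product \emph{sets} is exactly the device that circumvents the energy estimate your argument presupposes; without it, or without a proof of your seed bound, the proposal does not close.
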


If we allow $1\in \cI_{13}$, then the condition on the size of $\cI_{13}$ can be relaxed to $|\cI_{13}|>p^{\varepsilon}.$

\begin{theorem}
\label{thm:12 +1 int}
For any $\varepsilon>0$ there exists $\delta=\delta(\varepsilon)>0$ such that the following holds for any sufficiently large prime $p$: let $\cI_1,\cI_2,\ldots,\cI_{13}\subset \F_p^*$ be intervals with
$1\!\!\pmod p\in \cI_{13}$ and
$$
|\cI_i|>p^{1/4-\delta},\quad i=1,2,\ldots, 12;\quad |\cI_{13}|>p^{\varepsilon}.
$$
Then for any $a,b,c\in \F_p^*$ we have
$$
c \in a\prod_{i=1}^6\cI_i + b\prod_{i=7}^{13} \cI_i.
$$
\end{theorem}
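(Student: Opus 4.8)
The plan is to prove Theorem~\ref{thm:12 +1 int} by reducing it to Theorem~\ref{thm:13 int} through a dyadic/product-covering argument that exploits the extra freedom coming from $1\in\cI_{13}$. The point is that Theorem~\ref{thm:13 int} already delivers the representation $c\in a\prod_{i=1}^6\cI_i+b\prod_{i=7}^{13}\cI_i$ as soon as the last interval has length exceeding $p^{1/4+\varepsilon}$; so it suffices to manufacture, out of the short interval $\cI_{13}$ with $|\cI_{13}|>p^\varepsilon$ and the hypothesis $1\in\cI_{13}$, a long enough ``virtual'' interval while only shrinking the remaining intervals by a controlled amount. First I would isolate one of the first twelve intervals, say $\cI_7$, and observe that since all of $\cI_1,\dots,\cI_{12}$ have length $>p^{1/4-\delta}$, we have plenty of room to spare: each of them need only exceed $p^{1/4-\delta'}$ for the master theorem to apply with a slightly smaller $\delta'$.

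The main mechanism I would use is iterated multiplication by $\cI_{13}$. Because $1\in\cI_{13}$, the product set $\cI_{13}\cdot\cI_{13}\cdots\cI_{13}$ ($r$ factors) \emph{contains} $\cI_{13}$ itself (taking all but one factor equal to $1$), and more importantly its size grows: a short interval containing $1$ and at least one other element generates, under repeated product, a set whose cardinality expands geometrically until it saturates a positive-density portion of $\F_p^*$. Concretely, I would show that for a bounded number $r=r(\varepsilon)$ of repetitions, $\prod^{(r)}\cI_{13}$ contains an interval (or a Bohr-type structured set) of length $>p^{1/4+\varepsilon'}$, or at least that it can play the role of $\cI_{13}$ in Theorem~\ref{thm:13 int}. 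The cleanest route is to absorb the $r$ copies of $\cI_{13}$ into $r$ of the first twelve intervals: rewrite
\[
b\prod_{i=7}^{13}\cI_i \supseteq b\Bigl(\prod_{i=7}^{12}\cI_i\Bigr)\cdot\cI_{13},
\]
and then note that pairing each short factor of the enlarged product with one of the surplus-length intervals yields a new collection of thirteen intervals whose last one is long enough. This is where the hypothesis $1\in\cI_{13}$ is essential: it guarantees that enlarging by $\cI_{13}$ never loses the ability to set a factor to $1$, so the inclusions all go the right way and no solutions are discarded.

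The hard part will be making precise the claim that multiplying a short interval containing $1$ by itself (or combining it with the room available in $\cI_1,\dots,\cI_{12}$) actually produces the required length $p^{1/4+\varepsilon}$ while keeping the number of factors and the shrinkage of the other intervals uniformly bounded in terms of $\varepsilon$ alone. I expect this to require a sum-product or growth estimate: since $\cI_{13}$ is an interval of length $>p^\varepsilon$ that is not contained in any multiplicative coset of small size, its $r$-fold product set grows, and by a Plünnecke--Ruzsa type bound together with an interval-covering lemma one can extract from it the long structured set needed. I would choose $r$ so that $r\varepsilon$ exceeds the deficiency, and choose $\delta=\delta(\varepsilon)$ small enough that the $r$ units of length ``borrowed'' from the intervals $\cI_1,\dots,\cI_{12}$ still leave each of them above the threshold $p^{1/4-\delta'}$ demanded by Theorem~\ref{thm:13 int}. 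The bookkeeping of these two competing budgets---the growth rate of products of $\cI_{13}$ versus the permissible shrinkage of the other twelve intervals---is the real content; once it balances, the conclusion follows immediately by invoking Theorem~\ref{thm:13 int} on the reconfigured box.
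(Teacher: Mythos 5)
Your reduction of Theorem~\ref{thm:12 +1 int} to Theorem~\ref{thm:13 int} has a genuine gap, and in fact the paper does not derive one theorem from the other: it proves both simultaneously by a common argument in which only one lemma changes. The fatal step in your plan is the claim that a bounded-fold product set $\cI_{13}\cdots\cI_{13}$ (or a product $\cI_j\cI_{13}$) ``contains an interval of length $>p^{1/4+\varepsilon'}$,'' or can otherwise ``play the role of $\cI_{13}$'' in Theorem~\ref{thm:13 int}. Neither half of this can be rescued. First, product sets of short intervals are not known --- and not expected --- to contain intervals of such length: as integer sets, products of $r$ integers each at most $N$ omit every prime in $(N,N^r]$, so they contain no long run of consecutive integers, and Pl\"unnecke--Ruzsa type inequalities cannot help, since they control \emph{cardinalities} of product sets, never their additive structure. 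Second, Theorem~\ref{thm:13 int} cannot be invoked with a mere large set in place of the interval $\cI_{13}$: in its proof the length hypothesis on $\cI_{13}$ is exploited through Burgess's character-sum estimate (Lemma~\ref{lem: card XI}), which requires an interval. Likewise, the bookkeeping of ``borrowing length'' from $\cI_1,\ldots,\cI_{12}$ has no mechanism behind it: the sets $\cI_j\cI_{13}$ are not intervals, and surplus length of one interval cannot be transferred to another.

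What is actually needed --- and what the paper supplies --- is a cardinality growth estimate, not a structural one. The paper factors the problem as $\cA=\cI_1\cI_2\cI_3$, $\cB=\cI_4\cI_5\cI_6$, $\cC=\cI_7\cI_8\cI_9$, $\cD=(\cI_{10}\cI_{11}\cI_{12})\cI_{13}$, obtains $|\cA|,|\cB|,|\cC|,|\cI_{10}\cI_{11}\cI_{12}|>h^{3+o(1)}$ from Lemma~\ref{lem: BGKSh} with $h=p^{1/4-c}$, and then shows the key gain $|\cD|>h^{3+\delta_0}$. Here is where $1\in\cI_{13}$ enters, and it is used quite differently from the way you use it: since $\cI_{13}\subset\F_p^*$ and $1\in\cI_{13}$, the interval cannot wrap past $0$, so $\cI_{13}=\{1,2,\ldots,N\}$ with $N>p^{\varepsilon}$, i.e.\ it consists of \emph{small positive integers}. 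This is exactly what makes Lemma~\ref{lem: card XN} applicable: the congruence $y_1\cdots y_{n_0}\equiv y_{n_0+1}\cdots y_{2n_0}\pmod p$ with $1\le y_i\le N$ lifts to an equation over $\Z$ and is counted by the divisor bound, which (after H\"older and orthogonality of characters) yields the multiplicative-energy estimate $|\cX\cI_{13}|\ge |\cX|\min\bigl\{(p/|\cX|)^{1/n_0},\,N/|\cX|^{1/n_0}\bigr\}N^{o(1)}$ and hence the power gain $p^{\delta}$. Once $|\cA||\cB||\cC||\cD|>p^3$, the conclusion follows at once from the Hart--Iosevich lemma (Lemma~\ref{lem: HI}), which needs only cardinalities and no interval structure at all. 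Your intuition that $1\in\cI_{13}$ is the crucial hypothesis is correct, but its role is to make $\cI_{13}$ a set of small integers amenable to the divisor-bound argument, not to make iterated products behave monotonically.
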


From Theorem~\ref{thm:12 +1 int} we have the following consequence.

\begin{corollary}
\label{cor:12 int}
For any $\varepsilon>0$ there exists $\delta=\delta(\varepsilon)>0$ such that the following holds
for any sufficiently large prime $p$:  let $\cI_1,\cI_2,\ldots,\cI_{12}\subset \F_p^*$ be intervals satisfying $1\!\!\pmod p\in \cI_{12}$ and
$$
|\cI_i|>p^{1/4-\delta},\quad i=1,2,\ldots, 11;\quad |\cI_{12}|>p^{1/4+\varepsilon}.
$$
Then for any $a,b,c\in \F_p^*$ we have
$$
c \in a\prod_{i=1}^6\cI_i + b\prod_{i=7}^{12} \cI_i.
$$

\end{corollary}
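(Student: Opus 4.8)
The plan is to deduce this from Theorem~\ref{thm:12 +1 int} by splitting the distinguished interval $\cI_{12}$ into a product of two shorter intervals, one of which will play the role of the thirteenth interval containing $1\pmod p$. Accordingly, I would take the constant $\delta=\delta(\varepsilon)$ to be exactly the one furnished by Theorem~\ref{thm:12 +1 int} for the given $\varepsilon$, and assume $p$ is large. The first step is to normalize $\cI_{12}$. Since $\cI_{12}\subset\F_p^*$ contains $1\pmod p$ but not $0\pmod p$, and since $0$ is the immediate neighbour of $1$ in the cyclic order on $\Z/p\Z$, the residue $1$ is forced to be the left endpoint of the interval; otherwise contiguity would drag $0$ into $\cI_{12}$. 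Hence $\cI_{12}=\{1,2,\ldots,N\}\pmod p$ with $N=|\cI_{12}|>p^{1/4+\varepsilon}$ (and $N\le p-1$).

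Next I would perform the splitting. Set $M=\lceil p^{1/4-\delta}\rceil$ and $L=\lfloor N/M\rfloor$, and define the two intervals $\cJ=\{1,\ldots,M\}\pmod p$ and $\cU=\{1,\ldots,L\}\pmod p$. Both lie in $\F_p^*$, we have $|\cJ|=M>p^{1/4-\delta}$, and $1\in\cU$. Since $N/M\ge \tfrac12 p^{\varepsilon+\delta}$, for large $p$ this gives $|\cU|=L>p^{\varepsilon}$. The crucial point is that every product $jk$ with $1\le j\le M$ and $1\le k\le L$ lies in the integer range $[1,ML]\subseteq[1,N]\subset[1,p-1]$, so reduction modulo $p$ acts trivially on these products and therefore
$$
\cJ\,\cU\subseteq\{1,\ldots,N\}=\cI_{12}.
$$

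Finally I would apply Theorem~\ref{thm:12 +1 int} to the thirteen intervals $\cI_1,\ldots,\cI_{11},\cJ,\cU$, taken in that order, with $\cU$ (which contains $1\pmod p$ and has $|\cU|>p^\varepsilon$) in the role of $\cI_{13}$; the remaining twelve all have size exceeding $p^{1/4-\delta}$, so the hypotheses are met. This yields
$$
c \in a\prod_{i=1}^6\cI_i + b\Big(\prod_{i=7}^{11}\cI_i\Big)\cJ\,\cU,
$$
and the nesting $\cJ\,\cU\subseteq\cI_{12}$ then places the right-hand product set inside $b\prod_{i=7}^{12}\cI_i$, giving the desired conclusion. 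As this is essentially a repackaging of the preceding theorem, no genuine analytic difficulty arises; the only steps demanding care are the normalization $\cI_{12}=\{1,\ldots,N\}$ and the verification that $\cJ\,\cU$ embeds in $\cI_{12}$ with no wraparound modulo $p$, which is precisely what dictates the balance condition $ML\le N$ behind the choice of $M$ and $L$.
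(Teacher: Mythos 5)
Your proof is correct and takes essentially the same route as the paper: the paper also deduces the corollary from Theorem~\ref{thm:12 +1 int} by embedding a product of two initial-segment intervals inside $\cI_{12}$ (there of lengths $\lfloor p^{1/4+\varepsilon/2}\rfloor$ and $\lfloor p^{\varepsilon/2}\rfloor$, versus your $M\approx p^{1/4-\delta}$ and $L=\lfloor N/M\rfloor$), the difference being only bookkeeping in the choice of lengths and of which $\varepsilon$ is fed into the theorem. Your explicit normalization $\cI_{12}=\{1,\ldots,N\}\pmod p$ is exactly the fact the paper uses implicitly when it asserts $\cI_{12}\supset\cI_{12}'\cI_{13}$.
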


Indeed, if we set
$$
\cI_{12}'=\{1,2,\ldots, \lfloor p^{1/4+\varepsilon/2}\rfloor\}\!\!\!\pmod p,\quad \cI_{13}=\{1,2,\ldots, \lfloor p^{\varepsilon/2}\rfloor\}\!\!\!\pmod p,
$$
then under the condition of Corollary~\ref{cor:12 int} we have $\cI_{12}\supset \cI'_{12}$ $\cI_{13}$, and the claim follows from the application of Theorem~\ref{thm:12 +1 int}.

We remark that we state and prove our results for intervals of $\F_p^*$ rather than of $\F_p$ just for the sake of simplicity. Indeed, this restriction is not essential, as any nonzero interval $\cI\subset\F_p$ contains an interval $\cI'\subset \cI$ such that $0\not\in\cI'$
and $|\cI'|\ge |\cI|/3$.

\bigskip

In the case $b=0$, the equation~\eqref{eqn:main1} describes the problem of representability of residue classes by product of variables from corresponding intervals.
We shall consider the case when the variables are small positive integers.
It is known from~\cite{Gar} that for any $\varepsilon>0$ and a sufficiently large cube-free  $m\in\N$, every $\lambda$ with
$\gcd(\lambda,m)=1$ can be represented in the form
$$
x_1\cdots x_{8}\equiv \lambda\!\!\!\pmod m, \quad x_i\in\N,\quad x_i\le m^{1/4+\varepsilon}.
$$
Under the same condition, Harman and Shparlinski~\cite{HSh} proved that $\lambda$ can be represented in the form
$$
x_1\cdots x_{14}\equiv \lambda \!\!\!\pmod m,\quad x_i\in\N,\quad x_i\le m^{1/(4e^{1/2})+\varepsilon}.
$$

We shall prove the following result.

\begin{theorem}
\label{thm:Zm}
For any $0<c_0<1$ there exists a positive integer $n=n(c_0)$ and a number $\delta=\delta(c_0)>0$ such that the following holds: let $c_0\le c<1$ and
$$
\cA=\{x\!\!\!\pmod m ;\, \, 1\le x\le  m^{c},\, \gcd(x,m)=1\}.
$$
Then the set  $\cA^{n}$ is a subgroup of the multiplicative group $\Z_m^*$ and
$$
|\cA^{n}|>\delta \phi(m).
$$
\end{theorem}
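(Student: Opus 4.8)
The plan is to avoid character sums altogether---these are not available for an arbitrary composite modulus $m$---and to argue instead through smooth numbers together with Kneser's theorem. Write $N=\lfloor m^{c}\rfloor$, so that $\cA$ is the set of residues of the integers in $[1,N]$ coprime to $m$, and let $H=\langle\cA\rangle\le\Z_m^*$ be the subgroup generated by $\cA$. The goal is to show that a bounded power $\cA^{n}$ already equals $H$ and that $H$ is large.

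First I would show that a single bounded power of $\cA$ captures a positive proportion of $\Z_m^*$. The point is that every $N$-smooth integer $s\le m$ is a product of at most $k_2=k_2(c_0)$ integers from $[1,N]$: group the prime factors of $s$ greedily into blocks of product at most $N$; since each prime is at most $N$, two consecutive blocks cannot both have product below $\sqrt N$, so the number of blocks is $O(\log s/\log N)=O(1/c_0)$. Padding with the factor $1\in\cA$ then shows $s\pmod m\in\cA^{k_2}$. Hence $\cA^{k_2}$ contains the residue of every $N$-smooth $s\le m$ with $\gcd(s,m)=1$, and such integers are pairwise incongruent modulo $m$. By the standard smooth-number estimate their number is $\gg_{c_0}\phi(m)$, so $|\cA^{k_2}|\ge\beta_0\,\phi(m)$ for some constant $\beta_0=\beta_0(c_0)>0$.

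Next I would upgrade this constant density to a genuine subgroup in boundedly many steps. Set $B=\cA^{k_2}$ and iterate squaring, $B,B^{2},B^{4},\dots$. Kneser's theorem applied to $B^{2^{i}}\cdot B^{2^{i}}$ gives, at each stage, that either the cardinality grows by a factor $3/2$, or the stabilizer $H_i=\{g\in\Z_m^*:gB^{2^{i+1}}=B^{2^{i+1}}\}$ satisfies $|H_i|>\tfrac12|B^{2^{i}}|\ge\tfrac12\beta_0\phi(m)$. Since cardinalities start at $\beta_0\phi(m)$ and never exceed $\phi(m)$, the growth alternative can occur at most $\log_{3/2}(1/\beta_0)=O_{c_0}(1)$ times, so after $O_{c_0}(1)$ squarings we reach a genuine subgroup $H_i$ of density $>\beta_0/2$. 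At that point $B^{2^{i+1}}$ is a union of cosets of $H_i$ containing $H_i$ itself, so its image in the quotient $H/H_i$---a group of order $<2/\beta_0=O_{c_0}(1)$---grows under further squarings to all of $H/H_i$ within $O_{c_0}(1)$ more steps. Lifting back yields a bounded $n=n(c_0)$ with $\cA^{n}=H$. As $H=\langle\cA\rangle$ is a subgroup and $|\cA^{n}|=|H|\ge|B|\ge\beta_0\phi(m)$, the theorem follows with $\delta=\beta_0$.

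I expect the delicate input to be the lower bound $\gg_{c_0}\phi(m)$ for the number of $m^{c}$-smooth integers up to $m$ that are coprime to $m$, uniformly over completely arbitrary $m$ and over $c\in[c_0,1)$. For moduli with many small prime factors the smoothness and coprimality constraints interact, and one must run a sieve in tandem with Dickman's estimate to retain a positive proportion of $\phi(m)$; this is precisely the step where working with products of intervals, rather than with character sums, pays off. By contrast the group-theoretic part is robust: the sole role of the constant (rather than merely $\phi(m)/(\log m)^{O(1)}$) lower bound on $|\cA^{k_2}|$ is to force Kneser's dichotomy to terminate after a number of steps depending on $c_0$ alone.
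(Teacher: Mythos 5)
Your proposal is correct, and its first half coincides with the paper's: the greedy grouping of prime factors of smooth numbers into blocks of size at most $m^{c}$, giving $\cS \pmod m \subset \cA^{k_2}$ with $k_2 = O(1/c_0)$, is exactly the paper's argument, and the ``delicate input'' you flag at the end is precisely what the paper invokes as its Lemma~\ref{lem: TPh}, namely the Fouvry--Tenenbaum bound $\Psi_m(m;m^{c_0}) \gg_{c_0} \phi(m)$ from \cite{FT}; so this step is a citation in both treatments, and you correctly identified why the coprimality condition makes it nontrivial. Where you genuinely diverge is the group-theoretic upgrade from a dense subset to a subgroup. The paper applies a theorem of Olson (in the form of Hamidoune--R{\"o}dseth, Lemma~\ref{lem: Olson}): a generating subset $\cX \ni 1$ of a finite group $\cG$ is a basis of order at most $\max\{2, 2|\cG|/|\cX|-1\}$, which immediately yields $\cA^{n_1 h} = \langle \cA \rangle$ with $h \le 1 + 2\delta^{-1}$, i.e.\ $n$ \emph{linear} in $1/\delta$. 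You instead iterate Kneser's theorem: each squaring of $B^{2^i}$ either gains a factor $3/2$ in cardinality or produces a stabilizer of index $O(1/\beta_0)$, and the subsequent quotient argument in $H/H_i$ finishes in $O(1/\beta_0)$ further squarings. Your dichotomy, the termination count, the fact that $1 \in B$ makes all powers unions of $H_i$-cosets containing $H_i$, and the lift back to $\cA^n = H$ are all sound (and the uniformity of $n$ over $c \in [c_0,1)$ and $m$ follows from monotonicity of the powers, since $1 \in \cA$). The trade-off: Olson's theorem is the sharper, less familiar tool and gives $n = O(1/\delta \cdot 1/c_0)$, while your Kneser iteration uses only a textbook result but produces $n$ of size roughly $c_0^{-1} 2^{O(1/\beta_0)}$, exponentially worse --- immaterial for the statement as claimed, since only boundedness in terms of $c_0$ is required.
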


Here, as usual, $\phi(\cdot)$ is the Euler's totient function, $\Z_m^*$ is the multiplicative group of invertible
classes modulo $m$ and $\cA^n$ is the $n$-fold product set of $\cA$, that is,
$$
\cA^n=\{x_1\cdots x_n;\quad x_i\in \cA\}.
$$
Recall that $|\Z_m^*|=\phi(m)$.

From Theorem~\ref{thm:Zm} we shall derive the following consequences.

\begin{corollary}
\label{cor:Zm-1}
For any $\varepsilon>0$ there exists a positive integer $k=k(\varepsilon)$ such that for any sufficiently large positive integer $m$ the congruence
$$
x_1\cdots x_{k}\equiv 1\!\!\! \pmod m,\quad x_i\in\N,\quad x_i\le m^{\varepsilon}
$$
has a solution with $x_1\not=1$.
\end{corollary}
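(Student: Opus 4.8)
The plan is to deduce the corollary directly from Theorem~\ref{thm:Zm}, exploiting the fact that a \emph{subgroup} is closed under inversion; this is exactly what manufactures a \emph{nontrivial} product equal to $1$.

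First I would normalize the parameter: we may assume $0<\varepsilon<1$, since decreasing $\varepsilon$ only strengthens the conclusion (a solution with all $x_i\le m^{\varepsilon'}$ for some $\varepsilon'<\varepsilon$ is a fortiori a solution with $x_i\le m^{\varepsilon}$). I then apply Theorem~\ref{thm:Zm} with $c_0=c=\varepsilon$ to the set
$$
\cA=\{x\!\!\!\pmod m;\ 1\le x\le m^{\varepsilon},\ \gcd(x,m)=1\},
$$
obtaining an integer $n=n(\varepsilon)$ and a $\delta=\delta(\varepsilon)>0$ for which $H:=\cA^{n}$ is a subgroup of $\Z_m^*$ with $|H|>\delta\phi(m)$. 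Since $1\in\cA$, every element of $\cA$ equals its own product with $n-1$ copies of $1$, so $\cA\subseteq \cA^{n}=H$.

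The key step is to force a factor different from $1$. For all sufficiently large $m$ there is a prime $q$ with $1<q\le m^{\varepsilon}$ and $q\nmid m$: the number of primes up to $m^{\varepsilon}$ is $\gg m^{\varepsilon}/\log m$, which exceeds the number $\omega(m)=O(\log m)$ of prime divisors of $m$. Fixing such a $q$ and writing $a\equiv q\pmod m$, we have $a\in\cA\subseteq H$, and because $1<q\le m^{\varepsilon}<m$ the integer $q$ satisfies $q\ne 1$. As $H$ is a group, the inverse of $a$ lies in $H=\cA^{n}$, so we may write $a^{-1}\equiv y_1\cdots y_n\pmod m$ with each $y_i\in\cA$, i.e. $1\le y_i\le m^{\varepsilon}$ and $\gcd(y_i,m)=1$. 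Multiplying by $q$ yields
$$
q\,y_1\cdots y_n\equiv 1\!\!\!\pmod m,
$$
a representation of $1$ as a product of $k:=n+1$ integers from $[1,m^{\varepsilon}]$ whose first factor $q$ is distinct from $1$. Setting $x_1=q$ and $x_{i+1}=y_i$ completes the argument.

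The only genuine obstacle is the requirement $x_1\ne 1$: a direct appeal to $1\in\cA^{n}$ furnishes merely the useless identity $1\equiv 1\cdots 1$. The device that circumvents this is precisely the group structure supplied by Theorem~\ref{thm:Zm}; closure under inverses lets us pair a genuinely nontrivial small unit $q$ with a short representation of $q^{-1}$, while the smallness bound $x_i\le m^{\varepsilon}$ is automatic because every factor is drawn from $\cA$. The existence of the small coprime prime $q$ is an elementary counting point and is the only place where "sufficiently large $m$" is needed.
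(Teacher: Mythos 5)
Your proof is correct, and it runs on the same engine as the paper's: Theorem~\ref{thm:Zm} supplies a subgroup $\cA^{n}$ of $\Z_m^*$, and closure under inversion is what manufactures a nontrivial product equal to $1$. The difference is which element you invert. The paper picks an arbitrary $g\in\cA^{n}$ with $g\not\equiv 1\pmod m$ (such $g$ exists since $|\cA^{n}|>\delta\phi(m)\ge 2$ for large $m$), writes both $g$ and $g^{-1}$ as products of $n$ elements of $\cA$, and concatenates to get $k=2n$; to secure $x_1\ne 1$ one must additionally observe that a representation of $g\not\equiv 1$ must contain a factor different from $1$, which can be moved to the front. You instead invert a concrete element of $\cA$ itself: a prime $q$ with $1<q\le m^{\varepsilon}$ and $q\nmid m$ (your counting argument comparing $\pi(m^{\varepsilon})$ with $\omega(m)$ is fine, and is the only extra input), using $1\in\cA$ to embed $\cA\subseteq\cA^{n}$ so that $q^{-1}\in\cA^{n}$. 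This buys two small improvements: $k=n+1$ rather than $k=2n$, and the requirement $x_1\ne 1$ holds by construction rather than by a reordering remark. Both arguments need the normalization $0<\varepsilon<1$ to invoke Theorem~\ref{thm:Zm}, which you state explicitly and the paper leaves implicit.
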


\begin{corollary}
\label{cor:Zp}
There exists an absolute constant $n_0\in\N$ such that for any $0<\varepsilon<1$ and any sufficiently large prime $p>p_0(\varepsilon)$, every quadratic residue $\lambda$ modulo $p$ can be represented in the form
$$
x_1\cdots x_{n_0}\equiv \lambda\!\!\!\pmod p,\quad x_i\in\N,\quad x_i\le p^{1/(4e^{2/3})+\varepsilon}.
$$
\end{corollary}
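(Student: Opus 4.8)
The plan is to reduce the corollary to a statement purely about the subgroup furnished by Theorem~\ref{thm:Zm}. First I would observe that since each factor satisfies $x_i\le p^{1/(4e^{2/3})+\varepsilon}<p$, its residue lies in
$$
\cA=\{x\!\!\!\pmod p;\ 1\le x\le p^{c},\ \gcd(x,p)=1\},\qquad c=\tfrac{1}{4e^{2/3}}+\varepsilon,
$$
the coprimality being automatic for a prime modulus. Hence the representability of $\lambda$ by such a product of length $n_0$ is \emph{equivalent} to $\lambda\in\cA^{n_0}$. I therefore fix $c_0=1/(4e^{2/3})$, let $n_0=n(c_0)$ be the absolute constant produced by Theorem~\ref{thm:Zm} (it depends only on $c_0$, hence not on $\varepsilon$ or $p$), and apply that theorem with $m=p$ and $c=c_0+\varepsilon\ge c_0$ to conclude that $G:=\cA^{n_0}$ is a subgroup of $\Z_p^*$ with $|G|>\delta(p-1)$. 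It then remains only to prove that $G$ contains every quadratic residue.

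Next I would exploit that $\Z_p^*$ is cyclic of order $p-1$, so it has a unique subgroup of each order dividing $p-1$; in particular the set $H$ of quadratic residues is the \emph{unique} subgroup of index $2$. Writing $t=[\Z_p^*:G]=(p-1)/|G|$, containment of one subgroup in another in a cyclic group is governed by divisibility of orders, so $H\subseteq G$ holds if and only if $t\mid 2$, i.e.\ $t\in\{1,2\}$. Equivalently, it suffices to establish the density bound $|G|>(p-1)/3$, for then $(p-1)/t>(p-1)/3$ forces $t\le 2$. Thus the corollary follows once the subgroup guaranteed by Theorem~\ref{thm:Zm} is known to have density exceeding $1/3$ at the exponent $c_0=1/(4e^{2/3})$.

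The heart of the matter, and the step I expect to be the main obstacle, is precisely this quantitative threshold: showing that for $c\ge 1/(4e^{2/3})$ the interval-generated subgroup $G$ has index at most $2$ (equivalently $|G|>(p-1)/3$), thereby ruling out index $\ge 3$. This is where the particular constant is forced. Running the analytic mechanism behind Theorem~\ref{thm:Zm} at this exponent — a lower bound for the number of residue classes represented by products of $p^{c}$-smooth integers, built from the Dickman--de Bruijn count of smooth numbers (the $u^{-u}$-type decay of $\rho(u)$ with $u\approx 1/c$) together with the step that upgrades this set of residues to a genuine subgroup — produces a lower bound for the density of $G$ that increases with $c$, and the value $1/(4e^{2/3})$ is exactly the point at which this guarantee crosses $1/3$. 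I would carry this out by optimizing the smooth-number input against the covering step and checking that the crossover occurs at $1/(4e^{2/3})$.

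Finally, this also explains the quadratic-residue hypothesis and the gain over the general situation. Forcing $G=\Z_p^*$, so that \emph{every} residue were representable, would require density $>1/2$ and hence index $1$, which the same analysis attains only at the larger exponent $\sim 1/(4e^{1/2})$ of Harman--Shparlinski; relaxing the target from all residues to quadratic residues weakens the requirement from index $1$ to index $\le 2$, and it is exactly this slack that permits the smaller exponent $1/(4e^{2/3})$. Moreover, when $t=2$ one has $G=H$ precisely (the unique index-$2$ subgroup), so at this exponent only quadratic residues can be products of integers below $p^{c}$, which is why the statement is phrased for quadratic residues and cannot in general be upgraded to all residues.
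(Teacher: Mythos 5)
Your reduction is sound and matches the paper's framework: taking $c_0=1/(4e^{2/3})$, applying Theorem~\ref{thm:Zm} with $m=p$ to get a subgroup $G=\cA^{n_0}$ of $\F_p^*$ of positive density, and observing that, since $\F_p^*$ is cyclic, it suffices to show $G$ has index at most $2$. The gap is in the step you yourself flag as the heart of the matter: you propose to prove $|G|>(p-1)/3$ by running the ``analytic mechanism'' of Theorem~\ref{thm:Zm} and claim that $1/(4e^{2/3})$ is the exponent at which the resulting density guarantee crosses $1/3$. This cannot work and misidentifies where the constant comes from. The density $\delta$ in Theorem~\ref{thm:Zm} is inherited from the Fouvry--Tenenbaum smooth-number count (Lemma~\ref{lem: TPh}): for $c_0=1/(4e^{2/3})$ one has $1/c_0=4e^{2/3}\approx 7.8$, so the guaranteed density is of the order of the Dickman value $\rho(7.8)\approx 10^{-7}$, astronomically below $1/3$; Olson's lemma (Lemma~\ref{lem: Olson}) only upgrades the product set to a subgroup, it does not improve the density bound. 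No counting of this kind reaches $1/3$ at exponent $1/(4e^{2/3})$ --- indeed, if a pure density argument gave index $\le 2$ here, the Burgess--Vinogradov machinery would be superfluous.

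What the paper actually does, and what is missing from your proposal, is a least-non-residue argument. Since $G$ is a subgroup of the cyclic group $\F_p^*$ of index $\ell\le 1/\delta_0$, it is exactly the set of $\ell$-th powers, $G=\{x^{\ell}\bmod p\}$. Every positive integer up to $p^{1/(4e^{2/3})+\varepsilon}$ lies in $\cA\subset G$, so the least $\ell$-th power nonresidue $t$ satisfies $t> p^{1/(4e^{2/3})+\varepsilon}$. On the other hand, Vinogradov's argument combined with Burgess's character sum estimate gives
$$
t\le p^{1/(4e^{(\ell-1)/\ell})+\varepsilon/2},
$$
and for $\ell\ge 3$ one has $(\ell-1)/\ell\ge 2/3$, so this upper bound is at most $p^{1/(4e^{2/3})+\varepsilon/2}$, contradicting the lower bound for large $p$. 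Hence $\ell\in\{1,2\}$ and $G$ contains all quadratic residues. In particular, the constant $2/3$ in the exponent is precisely $(\ell-1)/\ell$ at $\ell=3$, i.e.\ the Vinogradov--Burgess bound for the least cubic-power nonresidue --- not a density threshold; the same correction applies to your closing remarks about the Harman--Shparlinski exponent $1/(4e^{1/2})$, which is the case $\ell=2$ of the same bound.
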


\section{Proof of Theorems~\ref{thm:13 int},\ref{thm:12 +1 int}}

The proof of Theorems~\ref{thm:13 int},\ref{thm:12 +1 int} is based on the arguments of Ayyad and Cochrane~\cite{AC} with some modifications.

\begin{lemma}
\label{lem: card XN} Let $N<p$ be a positive integer, $\cX\subset \{1,2,\ldots, p-1\}$. Then for any fixed integer constant $n_0>0$ we have
$$
|\{xy\!\!\!\!\pmod p; \,\, x\in \cX,\, 1\le y\le N\}|> \Delta |\cX|,
$$
where
$$
\Delta =\min\Bigl\{\Bigl(\frac{p}{|\cX|}\Bigr)^{1/n_0},\, \frac{N}{|\cX|^{1/n_0}}\Bigr\} N^{o(1)}
$$
as $N\to\infty$.
\end{lemma}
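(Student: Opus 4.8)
The plan is to prove the lower bound on the image set
$$
\cS=\{xy\!\!\!\pmod p;\, x\in\cX,\, 1\le y\le N\}
$$
by a second-moment / multiplicative energy argument. Let me write $r(t)$ for the number of representations $t\equiv xy\pmod p$ with $x\in\cX$ and $1\le y\le N$. Then $\sum_t r(t)=|\cX|\,N$, and by Cauchy--Schwarz,
$$
|\cS|\ge \frac{\bigl(\sum_t r(t)\bigr)^2}{\sum_t r(t)^2}=\frac{|\cX|^2N^2}{E},
$$
where $E=\sum_t r(t)^2$ counts solutions of $x_1y_1\equiv x_2y_2\pmod p$ with $x_i\in\cX$, $1\le y_i\le N$. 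So the whole task reduces to bounding this energy $E$ from above.

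The key step is to estimate $E$ using the fact that the multiplicative shifts of a short interval $\{1,\ldots,N\}$ are well distributed. Writing the congruence as $x_1/x_2\equiv y_2/y_1\pmod p$, I would count, for each ratio $\lambda=x_1/x_2$, the number of pairs $(y_1,y_2)\in[1,N]^2$ with $y_2\equiv \lambda y_1\pmod p$. The diagonal contribution ($x_1=x_2$, i.e. $\lambda=1$) gives $|\cX|\,N$. For the off-diagonal terms one uses a standard divisor-type / hyperbola bound: the number of $(y_1,y_2)\in[1,N]^2$ lying on the line $y_2\equiv \lambda y_1\pmod p$ is, on average over $\lambda$, of size $N^2/p$ up to factors $N^{o(1)}$ coming from the divisor function. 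The parameter $n_0$ enters because I do not want to bound $E$ crudely; instead I would iterate or interpolate, splitting according to whether $p/|\cX|$ or $N/|\cX|^{1/n_0}$ is the binding constraint, which is exactly the origin of the minimum defining $\Delta$. Roughly, optimizing the trade-off between the number of available ratios and the interval-intersection count produces the two competing quantities $(p/|\cX|)^{1/n_0}$ and $N/|\cX|^{1/n_0}$.

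Concretely I expect to run an inductive argument on the exponent: assuming a bound of the desired shape for smaller products, one shows that multiplying by one more interval factor multiplies the image-set size by at least $\min\{(p/|\cX|)^{1/n_0},\,N/|\cX|^{1/n_0}\}$, with the $N^{o(1)}$ losses accumulating only $n_0$ times (a fixed number), hence remaining $N^{o(1)}$ in total. The divisor-function bound $d(t)=t^{o(1)}$ and the bound on the number of lattice points on a modular line in a box are the two elementary inputs; both contribute only $N^{o(1)}$.

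The main obstacle will be controlling the off-diagonal energy uniformly in $\lambda$ rather than merely on average, and in particular ensuring that the error terms genuinely collapse to $N^{o(1)}$ after the $n_0$-fold iteration. The delicate point is the interplay between the two regimes in the minimum: when $|\cX|$ is large relative to $p^{1-1/n_0}$, the growth saturates at the group level and the factor $(p/|\cX|)^{1/n_0}$ takes over, whereas for smaller $|\cX|$ the interval length $N$ is the bottleneck. Getting the transition between these two regimes to match the stated $\Delta$ cleanly, without losing more than $N^{o(1)}$, is where the argument must be handled with care.
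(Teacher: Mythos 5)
Your opening reduction --- Cauchy--Schwarz against the representation function, so that everything hinges on an upper bound for the multiplicative energy $E$ --- is exactly the paper's first step. The gap is in how you propose to bound $E$. You write the energy as a sum over ratios $\lambda=x_1/x_2$ of the number $s(\lambda)$ of pairs $(y_1,y_2)\in[1,N]^2$ with $y_2\equiv\lambda y_1\pmod p$, and then invoke an ``on average over $\lambda$'' bound of size $N^2/p$ up to $N^{o(1)}$. But the average occurring in $E$ is weighted by the number of representations of $\lambda$ as a ratio of two elements of $\cX$, and $\cX$ is an \emph{arbitrary} subset of $\{1,\ldots,p-1\}$: nothing prevents its ratio set from concentrating exactly on those $\lambda$ for which $s(\lambda)$ is abnormally large (uniformly in $\lambda$ one only has $s(\lambda)\le N$, and this order is attained, e.g.\ $s(2)\ge\lfloor N/2\rfloor$). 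So the unweighted average is irrelevant, and no bound of the required strength follows from it. Note also that if your implied estimate $E\le |\cX|^2N^2/p+|\cX|N^{1+o(1)}$ were available for every $\cX$, the lemma would follow with $\Delta=\min\{p/|\cX|,\,N\}N^{o(1)}$, i.e.\ with no $n_0$ at all --- a statement much stronger than the one being proved, which should itself be a warning sign. Your fallback, an ``inductive argument on the exponent,'' does not engage with the statement: the lemma multiplies $\cX$ by a single interval once, there is no $n_0$-fold product of intervals in the conclusion to induct over, and no inductive step is formulated that one could check.

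The device you are missing is H\"older's inequality applied to the character-sum expression for the energy. The paper writes $J=\frac{1}{p-1}\sum_{\chi}\bigl|\sum_{x\in\cX}\chi(x)\bigr|^2\bigl|\sum_{y=1}^{N}\chi(y)\bigr|^2$ and applies H\"older with exponents $n_0/(n_0-1)$ and $n_0$, giving $J\le A^{(n_0-1)/n_0}B^{1/n_0}$, where $A=\frac{1}{p-1}\sum_\chi\bigl|\sum_{x\in\cX}\chi(x)\bigr|^{2n_0/(n_0-1)}$ and $B=\frac{1}{p-1}\sum_\chi\bigl|\sum_{y=1}^N\chi(y)\bigr|^{2n_0}$. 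The point of this decoupling is that the only information needed about $\cX$ is trivial: $A\le|\cX|^{2/(n_0-1)}\cdot|\cX|=|\cX|^{(n_0+1)/(n_0-1)}$ by the sup bound plus orthogonality. The entire multiplicative difficulty is transferred to $B$, which is the number of solutions of $y_1\cdots y_{n_0}\equiv y_{n_0+1}\cdots y_{2n_0}\pmod p$ with $1\le y_i\le N$ --- a quantity involving the interval alone. That congruence lifts to the integer equation $y_1\cdots y_{n_0}=y_{n_0+1}\cdots y_{2n_0}+pz$ with $|z|\le N^{n_0}/p$, and the divisor bound then gives $B\le(N^{n_0}/p+1)N^{n_0+o(1)}$. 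This is the only place where $n_0$ genuinely enters, and it is what produces the two competing terms in $\Delta$. Without this decoupling (or some substitute for it), the per-ratio counting you describe cannot be pushed through.
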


\begin{proof}

Let $J$ be the number of solutions of the congruence
$$
x_1y_1\equiv x_2y_2\!\!\!\pmod p,\quad x_1,x_2\in \cX, \quad 1\le y_1,y_2\le N.
$$
Then
$$
J=\frac{1}{p-1}\sum_{\chi}\Bigl|\sum_{x\in \cX}\chi(x)\Bigr|^2\Bigl|\sum_{y=1}^{N}\chi(y)\Bigr|^2.
$$
Therefore, by the H\"older inequality we get
\begin{equation}
\label{eqn: L1 J < A B}
J\le A^{(n_0-1)/n_0} B^{1/n_0},
\end{equation}
where
\begin{equation}
\label{eqn: L1 A < X power}
A=\frac{1}{p-1}\sum_{\chi}\Bigl|\sum_{x\in \cX}\chi(x)\Bigr|^{2n_0/(n_0-1)}, \quad B=\frac{1}{p-1}\sum_{\chi}\Bigl|\sum_{y=1}^{N}\chi(y)\Bigr|^{2n_0}.
\end{equation}
Next, we have
$$
A\le |\cX|^{2/(n_0-1)}\Bigl(\frac{1}{p-1}\sum_{\chi}\Bigl|\sum_{x\in \cX}\chi(x)\Bigr|^{2}\Bigr)=|\cX|^{(n_0+1)/(n_0-1)}.
$$
The quantity $B$ is equal to the number of solutions of the congruence
$$
y_1\cdots y_{n_0}\equiv y_{n_0+1}\cdots y_{2n_0}\pmod p,\quad 1\le y_i\le N.
$$
We express the congruence as the equation
$$
y_1\cdots y_{n_0} = y_{n_0+1}\cdots y_{2n_0}+pz,\quad 1\le y_i\le N,\, z\in \Z.
$$
Note than $|z|\le N^{n_0}/p.$ Hence, there are at most
$$
\Bigl(\frac{2N^{n_0}}{p}+1\Bigr)N^{n_0}
$$
possibilities for $(y_{n_0+1},\ldots, y_{2n_0}, z)$. from the estimate for the divisor function it follows that, for each fixed  $y_{n_0+1},\ldots, y_{2n_0}, z$
there are at most $N^{o(1)}$ possibilities for $y_1,\ldots, y_{n_0}$. Therefore,
$$
B\le \Bigl(\frac{N^{n_0}}{p}+1\Bigr)N^{n_0+o(1)}.
$$
Incorporating this and~\eqref{eqn: L1 A < X power} in~\eqref{eqn: L1 J < A B}, we obtain
$$
J\le |\cX|^{(n_0+1)/n_0}\Bigl(\frac{N}{p^{1/n_0}}+1\Bigr)N^{1+o(1)}.
$$
Therefore, from the relationship between the number of solutions of a symmetric congruence and the cardinality of the corresponding set, it follows
\begin{eqnarray*}
|\{xy\!\!\!\pmod p; \,\, x\in \cX,\, 1\le y\le N\}| \qquad \qquad \qquad \qquad \qquad \qquad\\  \qquad \qquad \ge \frac{|\cX|^2 N^2}{J}\ge \min\{|\cX|^{(n_0-1)/n_0}p^{1/n_0}, |\cX|^{(n_0-1)/n_0} N\}N^{o(1)},
\end{eqnarray*}
which concludes the proof of Lemma~\ref{lem: card XN}.
\end{proof}

\begin{lemma}
\label{lem: card XI} Let $\cX\subset \{1,2,\ldots, p-1\}$ and let $\cI\subset \{1,2,\ldots, p-1\}$ be an interval with $|\cI|>p^{1/4+\varepsilon}$, where $\varepsilon>0$. Then
$$
|\{xy\!\!\!\pmod p; \,\, x\in \cX,\, y\in \cI\}|> 0.5\min\{p, |\cX|p^{c}\}
$$
for some $c=c(\varepsilon)>0$.
\end{lemma}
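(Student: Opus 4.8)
The plan is to run the multiplicative-energy argument of Lemma~\ref{lem: card XN} on the pair $(\cX,\cI)$, replacing the divisor bound (which is tied to the interval $\{1,\dots,N\}$ sitting at the origin) by Burgess's character sum estimate, which is available for \emph{any} interval precisely once $|\cI|>p^{1/4+\varepsilon}$. Write $N=|\cI|$, let $M=|\{xy\pmod p:\,x\in\cX,\,y\in\cI\}|$, and let $J$ be the number of solutions of $x_1y_1\equiv x_2y_2\pmod p$ with $x_1,x_2\in\cX$ and $y_1,y_2\in\cI$. As at the end of the previous proof, $M\ge |\cX|^2N^2/J$. Expanding $J$ over multiplicative characters and applying H\"older exactly as in~\eqref{eqn: L1 J < A B}--\eqref{eqn: L1 A < X power}, with a parameter $n_0$ to be chosen, I would obtain
\[
J\le |\cX|^{(n_0+1)/n_0}B^{1/n_0},\qquad B=\frac{1}{p-1}\sum_{\chi}\Bigl|\sum_{y\in\cI}\chi(y)\Bigr|^{2n_0}.
\]
Thus the contribution of $\cX$ is harmless and the whole lemma reduces to bounding the $2n_0$-th moment $B$ of the character sum over $\cI$.

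For the interval $\{1,\dots,N\}$ of Lemma~\ref{lem: card XN} this was done by counting integer factorizations; that step depends on the position of the interval and fails for an interval lying near the middle of $\{1,\dots,p-1\}$, where the products $y_1\cdots y_{n_0}$ wrap around modulo $p$. Instead I would separate the principal character $\chi_0$, which contributes $N^{2n_0}/(p-1)$ since every element of $\cI$ is a unit, and estimate the rest by
\[
\frac{1}{p-1}\sum_{\chi\ne\chi_0}\Bigl|\sum_{y\in\cI}\chi(y)\Bigr|^{2n_0}\le\Bigl(\max_{\chi\ne\chi_0}\Bigl|\sum_{y\in\cI}\chi(y)\Bigr|\Bigr)^{2n_0-2}\cdot\frac{1}{p-1}\sum_{\chi}\Bigl|\sum_{y\in\cI}\chi(y)\Bigr|^{2}=N\Bigl(\max_{\chi\ne\chi_0}\Bigl|\sum_{y\in\cI}\chi(y)\Bigr|\Bigr)^{2n_0-2},
\]
using $\frac{1}{p-1}\sum_\chi|\sum_{y\in\cI}\chi(y)|^2=N$. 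This is where $|\cI|>p^{1/4+\varepsilon}$ enters: Burgess's inequality gives $\max_{\chi\ne\chi_0}|\sum_{y\in\cI}\chi(y)|\le Np^{-\delta}$ for some $\delta=\delta(\varepsilon)>0$, uniformly in the location of $\cI$. Choosing $n_0=n_0(\varepsilon)$ large enough that $2\delta(n_0-1)\ge1$ makes the nonprincipal part at most $N^{2n_0}/p$, whence $B\le 2N^{2n_0}/p$. Establishing this location-uniform bound on $B$ is the only real difficulty, and Burgess's estimate is exactly the tool that the divisor method of Lemma~\ref{lem: card XN} cannot supply.

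Substituting $B\le 2N^{2n_0}/p$ yields $J\le 2^{1/n_0}|\cX|^{(n_0+1)/n_0}N^2p^{-1/n_0}$ and hence
\[
M\ge \frac{|\cX|^2N^2}{J}\ge \tfrac12\,|\cX|^{(n_0-1)/n_0}p^{1/n_0}=\tfrac12\,|\cX|\,(p/|\cX|)^{1/n_0}.
\]
Since $n_0$ depends only on $\varepsilon$, the factor $(p/|\cX|)^{1/n_0}$ is at least $p^{c}$ with $c=c(\varepsilon)>0$ throughout the range where $|\cX|$ is bounded away from $p$ by a fixed power, which gives the stated bound $\tfrac12\min\{p,|\cX|p^{c}\}$ in that range. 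The complementary regime, where $|\cX|$ is within a small power of $p$ of $p$ so that the target is $\tfrac12 p$, is not the essential point and can be finished by a direct argument (for instance $M\ge|\cX|$ trivially handles $|\cX|\ge p/2$), the remaining thin range succumbing to a short covering estimate. The crux throughout is the uniform square-root cancellation in $\sum_{y\in\cI}\chi(y)$ for arbitrarily placed intervals of length $>p^{1/4+\varepsilon}$, i.e.\ Burgess's bound.
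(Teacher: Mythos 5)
Your overall skeleton (multiplicative energy plus Burgess) is sound, but routing the argument through the H\"older step of Lemma~\ref{lem: card XN} creates a genuine gap. After H\"older, the best you can conclude about $M=|\{xy\bmod p:\, x\in\cX,\, y\in\cI\}|$ is
$$
M\ \ge\ \tfrac12\,|\cX|\,(p/|\cX|)^{1/n_0},
$$
and the gain $(p/|\cX|)^{1/n_0}$ degenerates to $O(1)$ as $|\cX|$ approaches $p$, whereas the lemma demands a gain of a \emph{fixed} power $p^{c}$ for every $\cX$ with $|\cX|\le p^{1-c}$, and the saturated value $\tfrac12 p$ above that. Concretely, for any admissible choice of $c$ and $n_0$, your bound falls short of $\tfrac12\min\{p,|\cX|p^{c}\}$ throughout the range $p^{1-cn_0}<|\cX|<p/2$: for instance, when $|\cX|=p^{1-cn_0/2}$ the target is $\tfrac12|\cX|p^{c}$ but your inequality yields only $\tfrac12|\cX|p^{c/2}$. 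The trivial estimate $M\ge|\cX|$ covers only $|\cX|\ge p/2$, and the ``short covering estimate'' you invoke for what remains is not supplied --- nor do I see one. This regime is not a boundary technicality: it is exactly where H\"older discards the needed information, since it charges $|\cX|^{(n_0+1)/n_0}$ for the $\cX$-sums and the parasitic factor $|\cX|^{1/n_0}$ kills the large-$\cX$ range. Indeed, Lemma~\ref{lem: card XN}, which you are imitating, has this degradation built into its factor $(p/|\cX|)^{1/n_0}$; the present lemma is stated in a stronger form precisely so that it stays nontrivial when $\cX$ is large.

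The fix is not to patch the missing range but to delete H\"older altogether; ironically, your estimate of $B$ already contains every needed ingredient. Separate the principal character directly in $J$, use orthogonality for the $\cX$-sums (which costs exactly $|\cX|$, with no loss), and apply Burgess pointwise to the $\cI$-sums:
$$
J\ \le\ \frac{|\cX|^2|\cI|^2}{p-1}\ +\ \Bigl(\max_{\chi\ne\chi_0}\Bigl|\sum_{y\in\cI}\chi(y)\Bigr|\Bigr)^{2}\cdot\frac{1}{p-1}\sum_{\chi}\Bigl|\sum_{x\in\cX}\chi(x)\Bigr|^{2}\ \le\ \frac{|\cX|^2|\cI|^2}{p-1}+|\cX|\,|\cI|^{2}p^{-2\delta},
$$
whence $M\ge|\cX|^{2}|\cI|^{2}/J\ge\tfrac12\min\{p-1,\,|\cX|p^{2\delta}\}$, uniformly in $|\cX|$. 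This two-line argument is the paper's proof. A mitigating remark: in the only place the paper applies this lemma, $\cX=\cI_7\cI_8\cI_9$ has $|\cX|\le h^{3.1}<p^{0.8}$, so there your weaker bound would in fact suffice; but it does not prove the lemma as stated.
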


\begin{proof}

As in the proof of Lemma~\ref{lem: card XN}, we let $J$ be the number of solutions of the congruence
$$
x_1y_1\equiv x_2y_2\!\!\!\pmod p,\quad x_1,x_2\in \cX, \quad y_1,y_2\in\cI.
$$
Then
$$
J=\frac{1}{p-1}\sum_{\chi}\Bigl|\sum_{x\in \cX}\chi(x)\Bigr|^2\Bigl|\sum_{y\in \cI}\chi(y)\Bigr|^2.
$$
Since $|\cI|>p^{1/4+\varepsilon}$, from the  well-known character sum estimates of Burgess~\cite{Bur1, Bur2}, we have
$$
\Bigl|\sum_{n\in\cI} \chi(n)\Bigr| < |\cI|p^{-\delta},\quad \delta=\delta(\varepsilon)>0,
$$
for  any non-principal character $\chi\!\!\pmod p$. Therefore, separating the term that corresponds to the principal character $\chi=\chi_0$, we get
$$
J\le \frac{|\cX|^2|\cI|^2}{p-1}+|\cI|^2p^{-2\delta}\Bigl(\frac{1}{p-1}\sum_{\chi}\Bigl|\sum_{x\in \cX}\chi(x)\Bigr|^2\Bigr)=\frac{|\cX|^2|\cI|^2}{p-1}+|\cX||\cI|^2p^{-2\delta}.
$$
Hence,
$$
|\{xy\!\!\!\pmod p; \, x\in \cX,\, y\in \cI\}|\ge \frac{|\cX|^2|\cI|^2}{J}\ge 0.5\min\{p, |\cX|p^{\delta}\}.
$$

\end{proof}

In what follows, the elements of $\F_p$ will be represented by their concrete representatives from the set of integers $\{0,1,\ldots, p-1\}$.

Following the lines of the work of Ayyad and Cochrane~\cite{AC},  we appeal to the result of Hart and Iosevich~\cite{HI}.

\begin{lemma}
\label{lem: HI}
Let $\cA,\cB,\cC,\cD$ be subsets of $\F_p^*$
satisfying
$$
|\cA||\cB||\cC||\cD|>p^{3}.
$$
Then
$$
\F_p^*\subset \cA\cB+\cC\cD.
$$
\end{lemma}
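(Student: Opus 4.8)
The plan is to show that for every $\xi\in\F_p^*$ the representation count
$$
N(\xi)=\#\{(a,b,c,d)\in\cA\times\cB\times\cC\times\cD:\ ab+cd=\xi\}
$$
is strictly positive, since the inclusion $\F_p^*\subset\cA\cB+\cC\cD$ is precisely the assertion $N(\xi)>0$ for all such $\xi$. First I would detect the equation $ab+cd=\xi$ by additive characters $\ep(x)=e^{2\pi i x/p}$, which gives
$$
N(\xi)=\frac{M}{p}+\frac{1}{p}\sum_{t=1}^{p-1}\ep(-t\xi)\,S_{\cA\cB}(t)\,S_{\cC\cD}(t),\qquad M=|\cA||\cB||\cC||\cD|,
$$
where $S_{\cA\cB}(t)=\sum_{a\in\cA}\sum_{b\in\cB}\ep(tab)$ and $S_{\cC\cD}(t)$ is defined analogously. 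The first term is the expected main term $M/p$, and the entire task is to show that the character-sum remainder is smaller than $M/p$ once $M>p^{3}$.

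The key step is to evaluate the bilinear sums through multiplicative characters. Writing $\nu(s)=\#\{(a,b):ab=s\}=\frac{1}{p-1}\sum_{\chi}\bar\chi(s)\alpha(\chi)\beta(\chi)$ with $\alpha(\chi)=\sum_{a\in\cA}\chi(a)$ and $\beta(\chi)=\sum_{b\in\cB}\chi(b)$, and using that $\sum_{s\neq0}\bar\chi(s)\ep(ts)$ equals $-1$ for the principal character $\chi_0$ and equals $\chi(t)\tau(\bar\chi)$ with $|\tau(\bar\chi)|=\sqrt p$ otherwise, I would obtain
$$
S_{\cA\cB}(t)=\frac{1}{p-1}\Bigl(-|\cA||\cB|+\sum_{\chi\neq\chi_0}\alpha(\chi)\beta(\chi)\,\chi(t)\,\tau(\bar\chi)\Bigr),
$$
together with the analogous identity for $S_{\cC\cD}(t)$ written in characters $\psi$ with $\gamma(\psi),\delta(\psi)$. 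Substituting both into the remainder and summing over $t$ produces the sums $\sum_{t\neq0}\ep(-t\xi)(\chi\psi)(t)$, which contribute $-1$ when $\chi\psi=\chi_0$ and have modulus $\sqrt p$ otherwise.

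This naturally splits the remainder into a negligible constant piece, two mixed pieces that are of lower order by the Parseval identity $\sum_\psi|\gamma(\psi)|^2=(p-1)|\cC|$, a diagonal piece where $\psi=\bar\chi$, and an off-diagonal piece where $\chi\psi\neq\chi_0$. On the diagonal the two Gauss sums collapse via $\tau(\chi)\tau(\bar\chi)=\chi(-1)p$, and a Cauchy--Schwarz step bounds that contribution by $\sqrt{E_{\cA\cB}E_{\cC\cD}}\le M^{3/4}$, using $E_{\cA\cB}=\sum_s\nu(s)^2\le(|\cA||\cB|)^{3/2}$; this is dominated by $M/p$. I expect the off-diagonal piece to be the genuine obstacle: there three Gauss sums survive and yield a factor $p^{3/2}$, and after estimating $\sum_{\chi\neq\chi_0}|\alpha(\chi)\beta(\chi)|\le(p-1)\sqrt{|\cA||\cB|}$ by Cauchy--Schwarz, its contribution to $N(\xi)$ is at most of order $p^{3/2}\sqrt{M}/p=\sqrt{pM}$. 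Collecting the pieces gives $|N(\xi)-M/p|\le\sqrt{pM}+O(M^{3/4})$, so the main term dominates exactly when $M/p>\sqrt{pM}$, that is $M>p^{3}$, forcing $N(\xi)>0$. The delicate points will be the bookkeeping of the four pieces and verifying that only the triple Gauss-sum term reaches the critical size $\sqrt{pM}$, which is what pins the threshold at $p^{3}$.
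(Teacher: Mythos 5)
Your character-sum identities are all correct, and you correctly identify the off-diagonal piece (three Gauss sums) as the critical term, with the right bound: its contribution is at most $\frac{1}{p(p-1)^2}\,p^{3/2}\,[(p-1)\sqrt{|\cA||\cB|}\,][(p-1)\sqrt{|\cC||\cD|}\,]=\sqrt{pM}$. The gap is in the final collection step, and it is twofold. First, a normalization slip: with the prefactor $1/(p(p-1)^2)$ and $\tau(\chi)\tau(\bar\chi)=\chi(-1)p$, the diagonal piece is at most $\sqrt{E_{\cA\cB}E_{\cC\cD}}/(p-1)\le M^{3/4}/(p-1)$, not $\sqrt{E_{\cA\cB}E_{\cC\cD}}\le M^{3/4}$. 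This matters, because the bound you actually state, $|N(\xi)-M/p|\le\sqrt{pM}+O(M^{3/4})$, is useless at the relevant scale: for $M$ near $p^3$ one has $M^{3/4}\approx p^{9/4}\gg p^2\approx M/p$, so positivity of $N(\xi)$ only follows once $M\gg p^4$, a threshold far too weak both for the lemma and for the paper's application of it. Second, and more fundamentally, the inference ``the main term dominates exactly when $M/p>\sqrt{pM}$'' is invalid even after the bookkeeping is repaired: the hypothesis $M>p^3$ only guarantees $M/p-\sqrt{pM}>0$, and this margin can be as small as roughly $1/(2p)$ (take $M=p^3+1$), whereas your secondary terms (mixed pieces $\lesssim\sqrt M\approx p^{3/2}$, diagonal $\lesssim M^{3/4}/p\approx p^{5/4}$) do not vanish. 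Bounding the four pieces separately in absolute value therefore loses the sharp exponent irretrievably: what the corrected argument proves is solvability for $M\ge p^3+Cp^{5/2}$, not for all $M>p^3$.

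For comparison: the paper does not prove this lemma at all; it quotes it from Hart and Iosevich~\cite{HI}, and their argument shows precisely how to avoid your loss. They use no multiplicative characters. Writing $x=(a,c)\in E=\cA\times\cC$, $y=(b,d)\in F=\cB\times\cD$, so that $ab+cd=x\cdot y$ and $M=|E||F|$, one has $N(\xi)=M/p+R$ with $R=\frac1p\sum_{x\in E}\sum_{t\ne0}\ep(-t\xi)\sum_{y\in F}\ep(t\,x\cdot y)$. A single Cauchy--Schwarz in $x$, enlarging $E$ to all of $\F_p^2$, followed by orthogonality in $x$, leaves $|R|^2\le|E|\sum\ep(\xi(s-t))$, the sum running over $t,s\ne0$ and $y,y'\in F$ with $ty=sy'$; the diagonal $y=y'$ contributes $(p-1)|F|$ and each parallel pair $y'\ne y$ contributes exactly $-1$, whence $|R|^2\le(p-1)|E||F|<pM$ strictly. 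Thus the \emph{entire} error is $<\sqrt{pM}$, with no secondary terms, and $N(\xi)>M/p-\sqrt{pM}\ge0$ already for $M\ge p^3$. The point is that the one-shot Cauchy--Schwarz bundles your constant, mixed, diagonal and off-diagonal pieces into a single count whose sign structure is kept exactly; that is what pins the exponent at $3$. If you wanted to salvage the multiplicative-character route, you would need to exploit identities such as $\sum_{\chi\ne\chi_0}|\alpha(\chi)|^2=|\cA|(p-1-|\cA|)$ to extract compensating savings in the off-diagonal term -- doable, but considerably more delicate than the argument of~\cite{HI}.
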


We also need the following consequence of~\cite[Corollary 18]{BGKSh}.

\begin{lemma}
\label{lem: BGKSh}
Let $h<p^{1/4}$ and let $\cA_1,\cA_2,\cA_3\subset \F_p^*$ be intervals of cardinalities $|\cA_i|>h$, i=1,2,3. Then
$$
|\cA_1\cA_2\cA_3|\ge \exp\Bigl(-C\frac{\log h}{\log\log h}\Bigr)h^3.
$$
for some constant $C$.
\end{lemma}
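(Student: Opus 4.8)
The plan is to reduce the claimed lower bound on the product set to an upper bound on the multiplicative energy of the three intervals, and then to quote the energy estimate of~\cite[Corollary 18]{BGKSh}. Since passing to subintervals can only shrink the product set, I would first replace each $\cA_i$ by a subinterval of cardinality exactly $\lceil h\rceil$; it therefore suffices to treat the case $|\cA_i|\approx h$, so that the single parameter $h$ controls all of the quantities below.

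Next I would introduce the representation function and the multiplicative energy
$$
r(t)=\#\{(x_1,x_2,x_3)\in\cA_1\times\cA_2\times\cA_3:\ x_1x_2x_3\equiv t\!\!\!\pmod p\},\qquad E=\sum_{t}r(t)^2 .
$$
Then $\sum_t r(t)=|\cA_1||\cA_2||\cA_3|$, the support of $r$ is exactly $\cA_1\cA_2\cA_3$, and the Cauchy--Schwarz inequality gives
$$
|\cA_1\cA_2\cA_3|\ \ge\ \frac{\bigl(\sum_t r(t)\bigr)^2}{\sum_t r(t)^2}\ =\ \frac{(|\cA_1||\cA_2||\cA_3|)^2}{E}\ \ge\ \frac{h^6}{E}.
$$
Thus the whole statement follows once one shows that the energy satisfies $E\le \exp\bigl(C\log h/\log\log h\bigr)\,h^3$ for some absolute constant $C$.

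For the latter bound I would appeal directly to~\cite[Corollary 18]{BGKSh}. The hypothesis $h<p^{1/4}$ is what places us in the regime where the multiplicative energy of short intervals is, up to the diagonal, governed by the corresponding multiplicative equation over the integers; the off-diagonal contribution is then controlled by the divisor function, whose maximal order is $\exp\bigl(O(\log h/\log\log h)\bigr)$, and this is precisely the source of the exponential factor. Combining this bound on $E$ with the Cauchy--Schwarz estimate above yields
$$
|\cA_1\cA_2\cA_3|\ \ge\ \frac{h^6}{E}\ \ge\ \exp\Bigl(-C\frac{\log h}{\log\log h}\Bigr)h^3,
$$
as required.

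The step I expect to be the main obstacle is verifying that the form of~\cite[Corollary 18]{BGKSh} applies to three independently positioned intervals and that its hypotheses are met under $|\cA_i|>h$ and $h<p^{1/4}$; in particular one must check that the cited corollary already absorbs the wrap-around issue inherent in reducing the integer products $x_1x_2x_3$ and $y_1y_2y_3$ modulo $p$, so that no nontrivial multiple of $p$ produces extra solutions beyond those accounted for by the divisor-function estimate. Once the hypotheses are matched, the remaining manipulations are entirely routine.
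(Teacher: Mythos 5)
The paper offers no proof of this lemma at all---it is imported verbatim as a consequence of \cite[Corollary 18]{BGKSh}---and your proposal supplies exactly the routine reduction that the paper leaves implicit: Cauchy--Schwarz against the multiplicative energy, with the energy bound (equivalently, the uniform bound on the number of representations $x_1x_2x_3\equiv\lambda\pmod p$ with $x_i$ in intervals of length about $h<p^{1/4}$) taken from that corollary, so your argument is correct and is essentially the same as the paper's. One caution: your heuristic for \emph{why} the energy is small---reduction to the integer equation $x_1x_2x_3=y_1y_2y_3$ plus the divisor bound---is valid only for intervals anchored near the origin; for arbitrarily positioned intervals the wrap-around modulo $p$ is genuinely nontrivial and constitutes the deep content of \cite{BGKSh} (handled there by geometry-of-numbers methods, not divisor estimates), so the verification you flag at the end is indeed the right thing to check, and it is settled by the cited corollary rather than by that heuristic.
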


Now we proceed to derive Theorems~\ref{thm:13 int},\ref{thm:12 +1 int}. Let $p^{0.1}<h<p^{1/4}$ to be defined later and assume that
$$
|\cI_{i}|>h,\quad i=1,2,\ldots, 12.
$$
Define
$$
\cX=\cI_7\cI_8\cI_9, \quad \cA=\cI_1\cI_2\cI_3,\quad \cB= \cI_4\cI_5\cI_6, \quad \cC= \cI_7\cI_8\cI_9,\quad \cD=\cX\cI_{13}.
$$
From Lemma~\ref{lem: BGKSh} we have that $|\cX|>h^{3+o(1)}$ and
$$
|\cA||\cB||\cC|> h^{9+o(1)}.
$$
Now we observe that Lemmas~\ref{lem: card XN},\ref{lem: card XI} imply that
\begin{equation}
\label{eqn: D>h3+}
|\cD|=|\cX\cI_{13}|>h^{3+\delta_0}
\end{equation}
for some $\delta_0=\delta_0(\varepsilon)>0.$ Indeed, this is trivial for $|\cX|>h^{3.1}$, so let  $|\cX|<h^{3.1}$. Then
in the case of Theorem~\ref{thm:13 int} the estimate~\eqref{eqn: D>h3+} follows from Lemma~\ref{lem: card XI}.
In the case of Theorem~\ref{thm:12 +1 int} we apply Lemma~\ref{lem: card XN} with $N=\lfloor p^{\varepsilon}\rfloor$ and $n_0=\lceil 1/\varepsilon\rfloor$, and obtain that
$$
|\cD|>|\cX||\cI_3|^{\delta}>h^{3+0.9\delta}
$$
for some $\delta=\delta(\varepsilon)>0.$

Thus, we have~\eqref{eqn: D>h3+}, whence
$$
|\cA||\cB||\cC||\cD|>h^{12+0.9\delta_0}.
$$
Therefore, there exists $c=c(\varepsilon)>0$ such that if $h=p^{\frac{1}{4}-c}$, then we get
$$
|\cA||\cB||\cC||\cD|>p^3.
$$
Theorems~\ref{thm:13 int},\ref{thm:12 +1 int} now follow by appealing to Lemma~\ref{lem: HI}.

\section{Proof of Theorem~\ref{thm:Zm}}

Let $\cG$ be an abelian group written multiplicatively and let $\cX\subset G$. The set $\cX$ is a basis of order $h$ for $\cG$ if $\cX^h=\cG$. This definition implies that if $1\in \cX$ and
$\cX$ is a basis of order $h$ for $\cG$, then $\cX$ is also a basis of order $h_1$ for $\cG$ for any $h_1\ge h.$

We need the following consequence
of a result of Olson~\cite[Theorem 2.2]{Ol} given in Hamidoune and R{\"o}dseth~\cite[Lemma 1]{HR}.

\begin{lemma}
\label{lem: Olson}
Let $\cX$ be a subset of $\cG$. Suppose that $1\in \cX$ and that $\cX$ generates $\cG$. Then $\cX$ is a basis for $\cG$ of order at most
$\max\left\{2, \frac{2|\cG|}{|\cX|}-1\right\}.$
\end{lemma}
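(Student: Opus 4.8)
The plan is to establish the equivalent assertion $\cX^h=\cG$ for $h=\max\{2,\lceil 2|\cG|/|\cX|-1\rceil\}$, arguing by induction on $|\cG|$ and exploiting that $1\in\cX$ forces the chain $\cX\subseteq\cX^2\subseteq\cX^3\subseteq\cdots$ to be nondecreasing. First I would record two soft facts. Since $\cX$ generates $\cG$, the chain must stabilize at $\cG$; and it grows \emph{strictly} until it does so, because if $\cX^{k+1}=\cX^k\ne\cG$ then $\cX\cdot\cX^k=\cX^k$, and as $1\in\cX$ and left multiplication by each $x$ is a bijection this forces $x\cX^k=\cX^k$ for every $x\in\cX$; hence $\cG=\langle\cX\rangle$ stabilizes $\cX^k$ and $\cX^k=\cG$, a contradiction. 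Thus the whole issue is \emph{how fast} $|\cX^k|$ increases.

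The engine for the quantitative growth is Kneser's theorem. Writing $H$ for the stabilizer of $\cX^{k+1}$, Kneser gives $|\cX^{k+1}|\ge|\cX^k H|+|\cX H|-|H|\ge|\cX^k|+|\cX|-|H|$, using $1\in\cX$. When all these stabilizers are trivial, each step gains at least $|\cX|-1$ elements, so $|\cX^k|\ge|\cX|+(k-1)(|\cX|-1)$ and the chain reaches $\cG$ after about $|\cG|/|\cX|$ steps, comfortably inside the claimed bound (one checks $(|\cG|-1)/(|\cX|-1)\le 2|\cG|/|\cX|-1$ whenever $|\cX|\ge 2$). A clean complementary device handles the endgame and explains the factor $2$: once $|\cX^k|>|\cG|/2$, the two sets $\cX^k$ and $g\,(\cX^k)^{-1}$ each exceed half of $\cG$, so they meet for every $g\in\cG$, whence $g\in\cX^{2k}$ and $\cX^{2k}=\cG$. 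In particular $|\cX|>|\cG|/2$ already yields $\cX^2=\cG$, matching the value $2$ inside the maximum.

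The hard part is the case of nontrivial stabilizers, where the Kneser gain $|\cX|-|H|$ can be small or even vacuous. Here I would quotient: if $\cX^{k+1}$ has stabilizer $H\ne\{1\}$, pass to $\bar\cG=\cG/H$ with image $\bar\cX\ni\bar 1$ still generating $\bar\cG$, and apply the inductive hypothesis to saturate $\bar\cG$ in at most $\max\{2,2|\bar\cG|/|\bar\cX|-1\}$ steps; this says that $\cX^{h_1}$ already meets every coset of $H$. It then remains to complete the cosets, i.e. to absorb a full copy of $H$, which is where one uses growth inside the $H$-cosets and the relation between $|\bar\cX|$ and $|\cX|$. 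Balancing the order spent saturating the quotient against the order spent completing the cosets is the delicate bookkeeping that produces exactly the factor $2$ and the stated maximum; I expect this coset-completion step, rather than any single inequality, to be the main obstacle, and it is precisely where Olson's theorem, as packaged in~\cite{Ol,HR}, does the real work.
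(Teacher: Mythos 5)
Your proposal is not a complete proof: it gives out exactly at the point that matters. The trivial\hbox{-}stabilizer case (Kneser growth of $|\cX|-1$ per step) and the case $|\cX|>|\cG|/2$ (pigeonhole, giving order $2$) are handled correctly, but for the nontrivial-stabilizer case you only sketch a quotient-and-induct plan and then concede that the ``coset-completion step'' is where ``Olson's theorem, as packaged in~\cite{Ol,HR}, does the real work.'' The statement you were asked to prove \emph{is} that packaged result: it is Lemma 1 of Hamidoune--R{\"o}dseth~\cite{HR}, derived from~\cite[Theorem 2.2]{Ol}. Ending the argument by invoking it is circular, so as written the general case is simply not proven. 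For what it is worth, the paper itself contains no proof either --- it imports the lemma wholesale by citation --- so there is no in-paper argument to match; a blind attempt here had to be a genuine proof from scratch, and yours stops short of one.

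The gap is fillable along the lines you set up, and the missing idea is simpler than the ``absorb a full copy of $H$'' bookkeeping you anticipate. Let $k+1$ be the first exponent at which the stabilizer $H$ of $\cX^{k+1}$ is nontrivial. Then for every $j\ge k+1$ one has $\cX^{j}H=\cX^{j-k-1}\cX^{k+1}H=\cX^{j}$, i.e.\ \emph{all later powers are automatically unions of $H$-cosets}, so no separate coset completion is ever needed. By induction on $|\cG|$ applied to $\bar\cG=\cG/H$ with $\bar\cX$ the image of $\cX$, the quotient is saturated at some order $h_1\le\max\{2,\,2|\bar\cG|/|\bar\cX|-1\}\le\max\{2,\,2|\cG|/|\cX|-1\}$ (using $|\bar\cX|\ge|\cX|/|H|$), and then $j=\max\{h_1,k+1\}$ gives $\cX^{j}=\cX^{j}H=\cG$. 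It remains to bound $k+1$: your own Kneser step with trivial stabilizers up to step $k$ gives $|\cX^k|\ge k(|\cX|-1)+1$, and if $\cX^k\ne\cG$ this forces $k+1\le(|\cG|+|\cX|-3)/(|\cX|-1)$, which is at most $2|\cG|/|\cX|-1$ exactly when $|\cX|\ge2$ and $|\cG|\ge2|\cX|$; the complementary case $|\cG|<2|\cX|$ is your pigeonhole case of order $2$. Without this closing argument (or some substitute for it), your proposal proves the lemma only in the special cases and restates the citation in the general one.
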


We recall that $\Psi(x;y)$ denotes the number of $y$-smooth positive integers $n\le x$ (that is the number of positive integers $n\le x$ with no prime divisors greater than $y$),
and $\Psi_q(x;y)$ denotes the number of $y$-smooth positive integers $n\le x$ with $\gcd(n,q)=1$. It is well known that
for any $\varepsilon>0$ there exists $\delta=\delta(\varepsilon)>0$ such that
$\Psi(m;m^{\varepsilon})\ge \delta m$. We need the following lemma, which follows from~\cite[Theorem 1]{FT}.

\begin{lemma}
\label{lem: TPh}
For any $\varepsilon>0$ there exists $\delta=\delta(\varepsilon)>0$ such that
$$
\Psi_m(m;m^{\varepsilon})>\delta\phi(m).
$$
\end{lemma}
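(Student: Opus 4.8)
The plan is to reduce the coprimality condition to the classical smooth-number count and to exploit the fact that requiring $\gcd(n,m)=1$ costs, on average, a factor of size $\phi(m)/m$. Recall first the standard estimate $\Psi(m;m^{\varepsilon})\ge\delta_0 m$ for some $\delta_0=\delta_0(\varepsilon)>0$, which is the $y=m^{\varepsilon}$ case of Dickman's theorem and is already quoted in the text. The key structural observation is that if $n$ is $m^{\varepsilon}$-smooth, then $n$ is automatically coprime to every prime factor of $m$ exceeding $m^{\varepsilon}$, since such a prime cannot divide any $m^{\varepsilon}$-smooth number. Hence, writing $P=\prod_{p\mid m,\,p\le m^{\varepsilon}}p$ for the product of the small prime divisors of $m$, one has the exact identity $\Psi_m(m;m^{\varepsilon})=\Psi_P(m;m^{\varepsilon})$, so that only the small prime factors of $m$ need to be removed.

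Next I would make this reduction quantitative by inclusion--exclusion over the divisors of $P$. Every $d\mid P$ is squarefree and $m^{\varepsilon}$-smooth, and for such $d$ the $m^{\varepsilon}$-smooth multiples of $d$ up to $m$ are precisely $d$ times the $m^{\varepsilon}$-smooth integers up to $m/d$; this yields
$$\Psi_P(m;m^{\varepsilon})=\sum_{d\mid P}\mu(d)\,\Psi(m/d;m^{\varepsilon}).$$
The heuristic main term comes from $\Psi(m/d;m^{\varepsilon})\approx\rho(1/\varepsilon)\,m/d$, which after summation produces a local density factor comparable to $\prod_{p\mid m,\,p\le m^{\varepsilon}}(1-1/p)$. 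Since this product omits only the factors $(1-1/p)<1$ attached to the large prime divisors of $m$, it is at least $\prod_{p\mid m}(1-1/p)=\phi(m)/m$. Thus the expected size of $\Psi_m(m;m^{\varepsilon})$ is at least a constant multiple of $\frac{\phi(m)}{m}\,\Psi(m;m^{\varepsilon})\ge\delta_0\,\frac{\phi(m)}{m}\,m=\delta_0\phi(m)$, which is the desired conclusion with a suitable $\delta=\delta(\varepsilon)$.

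The delicate point, and the reason the lemma is drawn from \cite[Theorem 1]{FT} rather than established by hand, is the uniformity of the above estimate over all $m$. When $m$ has many small prime factors the integer $P$ can be nearly as large as $m$ (for instance when $m$ is a primorial), so the divisors $d\mid P$ in the inclusion--exclusion range up to large values and the individual asymptotics $\Psi(m/d;m^{\varepsilon})\sim\rho(u_d)\,m/d$ degrade, with $u_d=\log(m/d)/\log m^{\varepsilon}$ no longer close to $1/\varepsilon$. Controlling this alternating sum uniformly --- equivalently, showing that the local contributions of the small primes combine to a lower bound of the stated shape $\delta\phi(m)$ independently of the arithmetic of $m$ --- is exactly the content of the result of \cite{FT} on smooth numbers restricted by a coprimality condition. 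Invoking that result in place of the heuristic main-term computation completes the proof.
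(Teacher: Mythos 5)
Your proposal is correct and in the end rests on exactly the same ingredient as the paper: the paper's entire proof of this lemma is the citation of \cite[Theorem 1]{FT}, which you also invoke once you (rightly) observe that the naive inclusion--exclusion $\Psi_P(m;m^{\varepsilon})=\sum_{d\mid P}\mu(d)\Psi(m/d;m^{\varepsilon})$, with $P$ the product of the prime divisors of $m$ up to $m^{\varepsilon}$, cannot be controlled uniformly in $m$ by elementary means. Your preliminary reductions (coprimality to $m$ only constrains the small prime factors of $m$, and the expected local density $\prod_{p\mid m,\,p\le m^{\varepsilon}}(1-1/p)\ge\phi(m)/m$) are sound and give useful motivation, but the load-bearing step is the same Fouvry--Tenenbaum theorem the paper quotes.
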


We proceed to prove Theorem~\ref{thm:Zm}. Let $\cS=\cS(c_0, m)$ be the set of $m^{c_0}$-smooth positive integers $n\le m$ with $\gcd(n,m)=1$. As mentioned in~\cite{HSh}, if $x\in \cS$,
then we can combine the prime divisors of $x$ in a greedy way into factors of size at most $m^{c}$.
More precisely, we can write
$x=x_1\cdots x_k$
such that $x_1\le m^{c}$ and  $m^{c/2}\le x_j\le m^{c}$ for $j=2,\ldots, k.$
In particular, we have
$$
(k-1)c_0/2 \le (k-1)c/2\le 1.
$$
Hence, $k\le 2/c_0 +1$, and since $1\!\!\pmod m\in \cA$,  it follows that
$$
\cS\!\!\!\pmod m\subset \cA^{n_1};\quad n_1=\lceil2/c_0\rceil +1.
$$
In particular, by Lemma~\ref{lem: TPh} we have
\begin{equation}
\label{eqn: A n1 card}
|\cA^{n_1}|\ge |\cS|=\Psi_m(m;m^{\varepsilon})>\delta\phi(m)
\end{equation}
for some $\delta=\delta(c_0)>0.$

Let $h$ be the smallest positive integer such that $\cA^{n_1h}$ is a subgroup of $\Z_m^*$. Applying Lemma~\ref{lem: Olson} with $\cG=\cA^{n_1h}$ and $\cX=\cA^{n_1}$, we get that
$$
h\le 1+\frac{2|\cA^{n_1h}|}{|\cA^{n_1}|}\le 1+\frac{2|\Z_m^*|}{|\cA^{n_1}|}\le 1+\frac{2\phi(m)}{\delta\phi(m)} = 1+2\delta^{-1}.
$$
Therefore, since  $1\!\!\pmod m\in \cA$, we get that for $n=(1+\lceil 2\delta^{-1}\rceil)n_1$ the set $\cA^{n}$ is a multiplicative subgroup of $\Z_m^*$. Taking into account~\eqref{eqn: A n1 card},
we conclude the proof of Theorem~\ref{thm:Zm}.

Let now $g$ be any element of the group $\cA^{n}$ distinct from $1\pmod m$. We also have that $g^{-1}\in \cA^{n}.$ Thus, Corollary~\ref{cor:Zm-1}, with $k=2n$, follows from the representation
$gg^{-1}=1\!\!\pmod m.$

We shall now prove Corollary~\ref{cor:Zp}. Let
$$
\cA=\{x\!\!\!\pmod p;\,\, x\in\N,\, x\le p^{1/(4e^{2/3})+\varepsilon}\}.
$$
In Theorem~\ref{thm:Zm}, we take $m=p$,  $c_0=1/(4e^{2/3})$ and $c=1/(4e^{2/3})+\varepsilon$. Thus, there is an absolute constant $n_0$ such that $\cA^{n_0}$ is a subgroup of $\F_p^*$
and $|\cA^{n_0}|>\delta_0 (p-1)$ for some absolute constant $\delta_0>0$. In other words, there is an integer $\ell | p-1$ with $1\le \ell\le 1/\delta_0$ such that
$$
\cA^{n_0}=\{x^{\ell}\!\!\!\pmod p;\,\, 1\le x\le p-1\}.
$$
Let $t=t(\ell,p)$ be the smallest positive  $\ell$-th power nonresidue modulo $p$. According to the well-known consequence  of Vinogradov's work~\cite{Vin} combined with the Burgess character sum estimate~\cite{Bur1, Bur2}, we have that
$$
t\le p^{1/(4e^{(\ell-1)/\ell})+\varepsilon/2}.
$$
On the other hand, since $t\not\in \cA^{n_0}$ we have $t\ge p^{1/(4e^{2/3})+\varepsilon}$. Hence, $\ell\in\{1,2\}$ and the claim follows.

M.~Z.~Garaev, Centro de Ciencias Matem\'{a}ticas, Universidad
Nacional Aut\'onoma de M\'{e}xico, C.P. 58089, Morelia,
Michoac\'{a}n, M\'{e}xico.

Email: {\tt garaev@matmor.unam.mx}

\end{document}